\theoremstyle{plain}
\newtheorem{theorem}{Theorem}[section]
\newtheorem{lemma}[theorem]{Lemma}
\theoremstyle{definition}
\theoremstyle{remark}
\newtheorem{remark}[theorem]{Remark}
\begin{document}
	
	\title[Vanishing Coefficients of $q^{5n+r}$ and $q^{7n+r}$ in Certain Infinite $q$-series Expansions]
	{Vanishing Coefficients of $q^{5n+r}$ and $q^{7n+r}$ in Certain Infinite $q$-series Expansions }
	
		\author[Thejitha M.P ]{Thejitha M.P}
	\address{Thejitha M.P\\Ramanujan School of Mathematical Sciences\\Department of Mathematics\\ Pondicherry University\\ Puducherry 605 014, India}
	\email{tthejithamp@gmail.com}
		\author[ Anusree Anand]{ Anusree Anand}
	\address{Anusree Anand \\Ramanujan School of Mathematical Sciences\\Department of Mathematics\\ Pondicherry University\\ Puducherry 605 014, India}
	\email{anusreeanand05@gmail.com}
	
	\author[Fathima S.N]{Fathima S.N}
	\address{Fathima S. N \\Ramanujan School of Mathematical Sciences\\Department of Mathematics\\ Pondicherry University\\ Puducherry 605 014, India}
	\email{dr.fathima.sn@pondiuni.ac.in   (\Letter)}

	\subjclass[2020]{Primary 33D15}
	\keywords{$q$-series expansion, infinite products, Jacobi's triple product identity, vanishing coefficients}
	
	\begin{abstract}
			Motivated by the recent work of several authors on vanishing coefficients of the arithmetic progression in certain $q$-series expansion, we study some variants of these $q$-series and prove some comparable results. For instance, if
			\begin{align*}
				\sum_{n=0}^{\infty}c_1(n)q^n=\left(\pm q^2,\pm q^3; q^5\right)_\infty^2  \left( q, q^{14}; q^{15}\right)_\infty,
				\end{align*}
			then $c_1(5n+3)=0$.
	\end{abstract}
	\keywords{$q$-series expansion, infinite products, Jacobi's triple product identity, vanishing coefficients}
	
	\maketitle
	
	\section{Introduction}
	For complex numbers $a$ and $q$, with $|q|<1$, we define
	\begin{align*}
		(a;q)_\infty:=\prod_{k=0}^{\infty}\left(1-aq^k\right)
	\end{align*}
	and
	\begin{align*}
		\left(a_1,a_2,\cdots a_n;q\right)_\infty:=(a_1;q)_\infty (a_2;q)_\infty\cdots (a_n;q)_\infty.
	\end{align*}
	In \cite{a5}, Hirschhorn studied the following two $q$-series 
	\begin{align}\label{eq:v.1.1}
		(-q,-q^4;q^5)_\infty (q,q^9;q^{10})_\infty^3=\sum_{n=0}^{\infty}a(n)q^n
	\end{align}
	and 
	\begin{align}\label{eq:v.1.2}
		(-q^2,-q^3;q^5)_\infty (q^3,q^7;q^{10})_\infty^3= \sum_{n=0}^{\infty}b(n)q^n. 
	\end{align}
	He proved that 
	\begin{align*}
		a(5n+2)=a(5n+4)=0
	\end{align*} and
	\begin{align*}
		b(5n+1)=b(5n+4)=0.
	\end{align*}
	In sequel to the work of Hirschhorn, Tang \cite{a9} further investigated the vanishing coefficients of the arithmetic progression in infinite products similar to those defined in \eqref{eq:v.1.1} and \eqref{eq:v.1.2}. In particular, $a_1(n)$ and $b_1(n)$ are defined by 
	\begin{align*}
		(-q,-q^4;q^5)_\infty^3 (q^3,q^7;q^{10})_\infty=\sum_{n=0}^{\infty}a_1(n)q^n
	\end{align*} and
	\begin{align*}
		(-q^2,-q^3;q^5)_\infty^3 (q,q^9;q^{10})_\infty=\sum_{n=0}^{\infty}b_1(n)q^n.
	\end{align*}
	Tang proved that for $n\geq 0$, 
	\begin{align*}
		a_1(5n+3)=b_1(5n+1)=0.
	\end{align*}
	The properties of coefficients in power series expansions of various infinite products or quotients of infinite products have been extensively investigated. For more details, see \cite{a3,a6,a8,a9}. Further, exploration in the realm of vanishing coefficients in infinite products was made by Mc Laughlin\cite{a12} and Tang\cite{a10,a11} by considering a more general form of infinite products.
	
	Very recently, Ananya et. al {{\cite[Theorem 1.3]{anan}}}, subjected to the conditions in their paper, obtained the following results for vanishing coefficients of families of infinite products:
\begin{align}
X_{t,2t,5\ell,15\ell,2,1}(5n+2t)=Z_{t,2t,5\ell,15\ell,2,1}(5n+2t)=0,\label{3}\\
X_{2t,t,7\ell,21\ell,1,2}(7n+2t)=Y_{2t,t,7\ell,21\ell,1,2}(7n+2t)=0,\label{4}\\
X_{t,6t,7\ell,21\ell,2,1}(7n+4t)=Z_{t,6t,7\ell,21\ell,2,1}(7n+4t)=0\label{5}.
\end{align}
	\noindent In this paper, we further obtain new vanishing coefficients of arithmetic progression modulo 5 and 7 for certain infinite products. Our proof employs q-series manipulations, Jacobi triple product identity and elementary Ramanujan's theta function identities.
	
	\section{Preliminaries}
	
	 In this section, we review some preliminary results and lemmas that will be used in the subsequent section. 
	 	Ramanujan's general theta function \cite[Chapter 16, Eq.(18.1)]{a4}  is defined as follows:
	 \begin{align*}
	 	f(a,b):=\sum_{k=-\infty}^{\infty}a^{k(k+1)/2}b^{k(k-1)/2},\rm {where}\hskip.2cm|ab|<1.
	 \end{align*}
	 The Jacobi triple product identity in terms of Ramanujan's theta function is given by
	 \begin{align*}
	 	f(a,b)=\left(-a,-b, ab; ab\right)_\infty.
	 \end{align*}
	 We recall the following basic properties satisfied by $f(a,b)$, which we frequently use in our proofs without explicitly mentioning them.
	 \begin{lemma}\label{l.1} \cite[Chapter 16, entry 19]{a4} We have
	 	\begin{align*}
	 		f(a,b)=f(b,a),
	 	\end{align*}
	 	\begin{align*}
	 		f(1,a)=2f(a,a^3).
	 	\end{align*}
	 \end{lemma}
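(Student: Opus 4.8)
The plan is to prove both identities directly from the series definition $f(a,b)=\sum_{k=-\infty}^{\infty}a^{k(k+1)/2}b^{k(k-1)/2}$ by elementary re-indexing of the bilateral sum, with no appeal to the Jacobi triple product identity.

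For the symmetry $f(a,b)=f(b,a)$, I would replace the summation index $k$ by $-k$. Since $k\mapsto -k$ permutes $\mathbb{Z}$, the value of the sum is unchanged, while the exponents transform via $(-k)(-k+1)/2=k(k-1)/2$ and $(-k)(-k-1)/2=k(k+1)/2$; that is, the exponents of $a$ and $b$ are swapped, giving $f(a,b)=\sum_{k}a^{k(k-1)/2}b^{k(k+1)/2}=f(b,a)$. This step is immediate once the exponent computation is carried out.

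For $f(1,a)=2f(a,a^3)$, I would first set the first argument equal to $1$ and use $1^{m}=1$ to obtain $f(1,a)=\sum_{k=-\infty}^{\infty}a^{k(k-1)/2}$. The key idea is then to split this bilateral sum according to the parity of $k$. Writing $k=2j$ gives the exponent $2j(2j-1)/2=2j^2-j$, and writing $k=2j+1$ gives the exponent $(2j+1)(2j)/2=2j^2+j$, so that $f(1,a)=\sum_{j}a^{2j^2-j}+\sum_{j}a^{2j^2+j}$. Replacing $j$ by $-j$ in the second sum turns its exponent into $2j^2-j$, so the two sums coincide. Finally I would verify by a short exponent computation that $f(a,a^3)=\sum_{j}a^{j(j+1)/2+3\,j(j-1)/2}=\sum_{j}a^{2j^2-j}$, which identifies each half with $f(a,a^3)$ and yields $f(1,a)=2f(a,a^3)$.

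All the manipulations are purely formal and are justified by absolute convergence of the bilateral series under the stated modulus hypotheses (namely $|ab|<1$, which specialises to $|a|<1$ for both $f(1,a)$ and $f(a,a^3)$). Consequently the only place to be careful is the bookkeeping of the quadratic exponents: one must confirm that the even-index terms produce exactly the exponents $2j^2-j$ defining $f(a,a^3)$, and that the odd-index terms collapse onto the same exponent set under $j\mapsto -j$. That exponent matching is the crux of the second identity; everything else is a routine index substitution.
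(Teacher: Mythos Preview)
Your proof is correct. The paper does not actually prove this lemma at all; it merely states it with a citation to Berndt's \emph{Ramanujan's Notebooks}, Chapter~16, Entry~19, so there is no argument in the paper to compare against. Your direct series proof---the $k\mapsto -k$ substitution for symmetry, and the even/odd split of $\sum_k a^{k(k-1)/2}$ followed by the exponent check $j(j+1)/2+3j(j-1)/2=2j^2-j$ for the second identity---is the standard elementary argument and goes through exactly as you describe.
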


	 \begin{lemma}\label{l.2}\cite[Chapter 16, entry 21]{a4} If $|q|<1$, then
	 	\begin{align*}
	 		\phi(q):=f(q,q)=\sum_{k=-\infty}^{\infty}q^{k^2},	\end{align*}
	 	\begin{align*}
	 		\psi(q):=f(q,q^3)=\sum_{k=0}^{\infty}q^{k(k+1)/2}.
	 	\end{align*}
	 \end{lemma}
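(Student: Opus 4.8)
The plan is to read off both identities directly from the defining series $f(a,b)=\sum_{k=-\infty}^{\infty}a^{k(k+1)/2}b^{k(k-1)/2}$ by specializing the two parameters and simplifying the exponent of $q$; no deep input is needed, so this is essentially a bookkeeping exercise.

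First, for $\phi(q)$ I would put $a=b=q$. The general term then becomes $q^{k(k+1)/2}\,q^{k(k-1)/2}=q^{[k(k+1)+k(k-1)]/2}=q^{k^2}$, and summing over all integers $k$ gives $\phi(q)=f(q,q)=\sum_{k=-\infty}^{\infty}q^{k^2}$ at once.

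Next, for $\psi(q)$ I would put $a=q$, $b=q^3$. The general term becomes $q^{k(k+1)/2}\,q^{3k(k-1)/2}=q^{[k(k+1)+3k(k-1)]/2}=q^{2k^2-k}$, so $f(q,q^3)=\sum_{k=-\infty}^{\infty}q^{2k^2-k}$. The remaining task is to re-index this bilateral sum over the triangular numbers. I would split according to the sign of $k$: for $k\ge 1$ set $m=2k-1$, so that $m$ ranges over the positive odd integers and $m(m+1)/2=(2k-1)k=2k^2-k$; for $k\le 0$ set $m=-2k$, so that $m$ ranges over the nonnegative even integers and $m(m+1)/2=(-2k)(1-2k)/2=2k^2-k$. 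Since these two progressions partition $\{0,1,2,\ldots\}$ with no overlap, the bilateral sum collapses to $\sum_{m=0}^{\infty}q^{m(m+1)/2}$, which is the asserted formula for $\psi(q)$.

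The whole argument is elementary arithmetic with exponents; the only step that calls for any care is the re-indexing in the second identity, where one must verify that $\{2k-1:k\ge 1\}$ and $\{-2k:k\le 0\}$ together hit every nonnegative integer exactly once and that both branches reproduce the exponent $2k^2-k=m(m+1)/2$. If one preferred, one could instead apply the Jacobi triple product identity to write $f(q,q^3)=(-q,-q^3,q^4;q^4)_\infty$ and identify it with the familiar product form of $\psi(q)$, but the direct re-indexing above is the most self-contained route and needs no further product identity.
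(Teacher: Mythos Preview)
Your proof is correct. The paper does not actually prove this lemma at all; it is stated with a citation to Berndt's \emph{Ramanujan's Notebooks, Part III} (Chapter 16, Entry 21) and treated as a standard preliminary fact. Your direct specialization of the defining series for $f(a,b)$, together with the odd/even re-indexing to convert $\sum_{k\in\mathbb{Z}}q^{2k^2-k}$ into $\sum_{m\ge 0}q^{m(m+1)/2}$, is exactly the elementary verification one would give if asked to justify the quoted result, and it requires nothing beyond the definition.
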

	 \begin{lemma}\label{l.3}We have
	 	\begin{align*}
	 		f(a,b)=f(a^3b,ab^3)+af(b/a,a^5b^3),
	 	\end{align*}
	 	\begin{align*}
	 		f^2(a,b)=f(a^2,b^2)f(ab,ab)+af(b/a,a^3b)f(1,a^2b^2).
	 	\end{align*}
	 	Moreover if $ab=cd$, then
	 	\begin{equation}\label{1.2.9}
	 		f\left(a,b\right)f\left(c,d\right)= f\left(ac,bd\right)f\left(ad,bc\right)+af\left(\dfrac{b}{c}, \dfrac{c}{b}abcd\right)f\left(\dfrac{b}{d}, \dfrac{d}{b} abcd\right).
	 	\end{equation}
	 	\begin{proof}: Equation \eqref{1.2.9} comes from \cite[p.45, Entry 29]{a4} and \cite[p.9, Theorem 0.6]{a13}.
	 	\end{proof}
	 \end{lemma}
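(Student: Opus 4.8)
The lemma comprises three identities for Ramanujan's theta function, and my plan is to treat them in the order: first the two-term dissection $f(a,b)=f(a^3b,ab^3)+af(b/a,a^5b^3)$, then the general product formula \eqref{1.2.9} under the constraint $ab=cd$, and finally the squaring formula for $f^2(a,b)$, which I will obtain as a direct specialization of \eqref{1.2.9}. All three rest on the series definition $f(a,b)=\sum_{k=-\infty}^{\infty}a^{k(k+1)/2}b^{k(k-1)/2}$ together with the symmetry $f(a,b)=f(b,a)$ from Lemma~\ref{l.1}.

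For the first identity I would split the defining series according to the parity of the index $k$. Setting $k=2m$ produces the exponent pair $(2m^2+m,\,2m^2-m)$ on $(a,b)$; matching these against the template $A^{m(m+1)/2}B^{m(m-1)/2}$ forces $A=a^3b$ and $B=ab^3$, so the even-indexed part equals $f(a^3b,ab^3)$. Setting $k=2m+1$ produces the pair $(2m^2+3m+1,\,2m^2+m)$; pulling out one factor of $a$ leaves $\sum_m a^{2m^2+3m}b^{2m^2+m}$, and the reindexing $m\mapsto -m$ (equivalently the symmetry of Lemma~\ref{l.1}) rewrites this as $f(b/a,a^5b^3)$. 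Summing the two parts yields the first identity; this step is elementary bookkeeping of quadratic exponents.

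For \eqref{1.2.9} I would expand $f(a,b)f(c,d)$ as the double series $\sum_{j,k}a^{j(j+1)/2}b^{j(j-1)/2}c^{k(k+1)/2}d^{k(k-1)/2}$ and apply the unimodular change of summation variables that regroups the lattice $(j,k)$ by the parity of $j+k$; the hypothesis $ab=cd$ is precisely what causes the mixed exponents to collapse into the two single theta products on the right-hand side. Concretely I would invoke \cite[p.45, Entry~29]{a4} together with \cite[p.9, Theorem~0.6]{a13}, which yield \eqref{1.2.9} directly. This is the main obstacle of the lemma: the change of variables and the accounting for the constraint $ab=cd$ require genuine care, in contrast to the routine first identity.

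Finally, the squaring formula is the specialization $c=a$, $d=b$ of \eqref{1.2.9}, for which the constraint $ab=cd$ holds automatically. Substituting gives $f(ac,bd)=f(a^2,b^2)$, $f(ad,bc)=f(ab,ab)$, $f(b/c,(c/b)abcd)=f(b/a,a^3b)$ and $f(b/d,(d/b)abcd)=f(1,a^2b^2)$, so the right-hand side of \eqref{1.2.9} becomes $f(a^2,b^2)f(ab,ab)+a\,f(b/a,a^3b)f(1,a^2b^2)$, which is exactly the asserted expression for $f^2(a,b)$.
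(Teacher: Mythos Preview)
Your proposal is correct and, for the one identity the paper actually addresses, it takes the same route: the paper's entire proof of Lemma~\ref{l.3} is the citation of \cite[p.45, Entry~29]{a4} and \cite[p.9, Theorem~0.6]{a13} for \eqref{1.2.9}, exactly as you invoke. Your parity-splitting argument for the first identity and the specialization $c=a$, $d=b$ for the squaring formula are standard and correct additions that the paper simply omits.
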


	\section{Vanishing Coefficients in Certain Infinite $q$-series Expansions}

In this section we state and prove our main results.

\begin{theorem}\label{t:v.1.1}
	For non-negative integers $\alpha$ and $\beta\leq 15$, define
	\begin{align*}
		\sum_{n=0}^{\infty}a(n)q^n=\left(q,q^4;q^5\right)_\infty \left(\pm q^6,\pm q^9;q^{15}\right)_\infty^2,
	\end{align*}
	\begin{align*}
		\sum_{n=0}^{\infty}b_\alpha(n)q^n=\left(\pm q,\pm q^4;q^5\right)_\infty^2 \left(q^\alpha,q^{15-\alpha};q^{15}\right)_\infty,
	\end{align*}
	\begin{align*}
		\sum_{n=0}^{\infty}c_\beta(n)q^n=\left(\pm q^2,\pm q^3;q^5\right)_\infty^2 \left(q^\beta,q^{15-\beta};q^{15}\right)_\infty.
	\end{align*}
	\rm{Then}\\
	$a(5n+3)=0,$\\
	$b_2(5n+2)=b_3(5n+4)=b_7(5n+3)=0,$\\
	$c_1(5n+3)=c_4(5n+4)=c_6(5n+1)=0.$                 
\end{theorem}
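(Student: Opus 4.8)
The plan is to rewrite everything in terms of Ramanujan's theta functions via the Jacobi triple product identity, to dissect the resulting theta products using the identities of Lemma~\ref{l.3}, and to finish by a count of residues modulo $5$. I describe the argument for $c_1(5n+3)=0$, the instance quoted in the abstract; the remaining assertions go through by the same moves with the target residue and the auxiliary theta functions changed. First, from $f(-q^2,-q^3)=(q^2,q^3,q^5;q^5)_\infty$ and $f(q^2,q^3)=(-q^2,-q^3,q^5;q^5)_\infty$ one has $(\pm q^2,\pm q^3;q^5)_\infty=f(\mp q^2,\mp q^3)/(q^5;q^5)_\infty$, and likewise $(q,q^{14};q^{15})_\infty=f(-q,-q^{14})/(q^{15};q^{15})_\infty$, so that
\begin{align*}
\sum_{n=0}^{\infty}c_1(n)q^n=\frac{f(\mp q^2,\mp q^3)^{2}\,f(-q,-q^{14})}{(q^5;q^5)_\infty^{2}(q^{15};q^{15})_\infty}.
\end{align*}
Since the denominator is a power series in $q^5$ with constant term $1$, it suffices to prove, for each choice of sign, that the numerator $f(\mp q^2,\mp q^3)^{2}f(-q,-q^{14})$ contains no term $q^{5n+3}$.

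Next I would apply the square formula $f^{2}(a,b)=f(a^{2},b^{2})f(ab,ab)+af(b/a,a^{3}b)f(1,a^{2}b^{2})$ of Lemma~\ref{l.3} with $(a,b)=(\mp q^2,\mp q^3)$, together with $f(1,x)=2f(x,x^3)$ from Lemma~\ref{l.1} and the notation of Lemma~\ref{l.2}, to get
\begin{align*}
f^{2}(\mp q^2,\mp q^3)=f(q^4,q^6)\,\phi(q^5)\mp 2q^{2}f(q,q^9)\,\psi(q^{10}).
\end{align*}
Writing $f^{2}(\mp q^2,\mp q^3)f(-q,-q^{14})=A(q)\mp B(q)$ with $A(q)=f(q^4,q^6)\phi(q^5)f(-q,-q^{14})$ and $B(q)=2q^{2}f(q,q^9)\psi(q^{10})f(-q,-q^{14})$, I see that both sign cases follow at once provided $A(q)$ has no term $q^{5n+3}$ and $f(q,q^9)\psi(q^{10})f(-q,-q^{14})$ has no term $q^{5n+1}$.

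The crux — and the step I expect to be the main obstacle — is to massage $A(q)$ and $B(q)$, using repeatedly the product-to-sum identity \eqref{1.2.9} (its hypothesis $ab=cd$ being arranged by pairing theta factors of equal modulus) and the $2$-dissection $f(a,b)=f(a^{3}b,ab^{3})+af(b/a,a^{5}b^{3})$ of Lemma~\ref{l.3}, until each is expressed as a finite sum $\sum_i q^{e_i}S_i(q^5)\,f(q^{u_i},q^{v_i})$ with every $S_i$ a power series in $q^5$. Sample steps of the kind required are $f(q^4,q^6)\phi(q^5)=f(q^9,q^{11})^{2}+q^4 f(q,q^{19})^{2}$, obtained from one use of \eqref{1.2.9}, and $f(q^9,q^{11})f(q,q^{19})=\psi(q^{10})f(q,q^9)$, obtained from another. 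Because multiplication by a power series in $q^5$ does not disturb residues modulo $5$, the arithmetic progression $q^{5n+3}$ (respectively $q^{5n+1}$) extracted from the final sum should then collapse, after collecting the $S_i(q^5)$ and using one further theta identity, to $0$ — either because every $q^{e_i}f(q^{u_i},q^{v_i})$ avoids the relevant residue class, or because the finitely many offending terms cancel in pairs. The delicate point is choosing the order of the dissections so that this residue count comes out clean; the identities themselves are all elementary consequences of Lemmas~\ref{l.1}--\ref{l.3}.

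The remaining assertions are proved by the identical scheme. For $c_4(5n+4)=0$ and $c_6(5n+1)=0$ one simply replaces $f(-q,-q^{14})$ by $f(-q^{4},-q^{11})$, respectively $f(-q^{6},-q^{9})$, and the target residue $3$ by $4$, respectively $1$. For the $b_\alpha$ statements one starts instead from $f^{2}(\mp q,\mp q^4)=f(q^2,q^8)\phi(q^5)\mp 2q\,f(q^3,q^7)\psi(q^{10})$ multiplied by $f(-q^{\alpha},-q^{15-\alpha})$ with $\alpha\in\{2,3,7\}$, and for $a(5n+3)=0$ from $f(-q,-q^4)$ multiplied by $f^{2}(\mp q^6,\mp q^9)=f(q^{12},q^{18})\phi(q^{15})\mp 2q^{6}f(q^{3},q^{27})\psi(q^{30})$. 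In every case it is only the dissection bookkeeping of the previous paragraph that has to be carried out afresh.
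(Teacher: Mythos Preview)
Your opening moves are exactly right and coincide with the paper's: pass to Ramanujan theta functions via Jacobi's triple product, strip off the $(q^5;q^5)_\infty$ and $(q^{15};q^{15})_\infty$ denominators, and expand the squared factor with the identity $f^{2}(a,b)=f(a^{2},b^{2})\phi(ab)+2af(b/a,a^{3}b)\psi(a^{2}b^{2})$. Your formula $f^{2}(\mp q^2,\mp q^3)=f(q^4,q^6)\phi(q^5)\mp 2q^{2}f(q,q^9)\psi(q^{10})$ and the analogous ones for the other cases are correct.

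The gap is at the step you yourself flag as the ``crux'' and ``main obstacle'': you propose to iterate the product identity \eqref{1.2.9} until every term has the shape $q^{e_i}S_i(q^5)f(q^{u_i},q^{v_i})$, but you do not carry this out, and the hedged language (``should then collapse'', ``the delicate point is choosing the order'') makes clear the step is unexecuted. In fact the paper does \emph{not} use \eqref{1.2.9} here at all. After the square formula it applies the two-term dissection $f(a,b)=f(a^{3}b,ab^{3})+af(b/a,a^{5}b^{3})$ to the remaining \emph{single} theta factor, obtaining four products $S_1,\dots,S_4$, each a product of two theta functions. Rather than manipulating these algebraically, the paper writes each $S_i$ as an explicit double sum $\sum_{m,n\in\mathbb{Z}}q^{Am^{2}+Bm+Cn^{2}+Dn+E}$, solves the pair of congruences that force the exponent into the target residue class modulo $5$, and parametrises the solutions by a unimodular change of variables $(m,n)\mapsto(r,s)$. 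The resulting $5$-dissected pieces then visibly cancel in pairs (e.g.\ the $3$-component of $S_1$ equals that of $S_2$, and likewise $S_3$ with $S_4$). This lattice/quadratic-form argument is the actual mechanism, and it replaces the unspecified chain of applications of \eqref{1.2.9} in your plan. To complete your proof you should drop the \eqref{1.2.9} route and instead, for $c_1$, two-dissect $f(-q,-q^{14})$, pair the pieces with $f(q^4,q^6)$ and $f(q,q^9)$, expand as double sums of the form $\sum q^{5m^{2}+\cdots+30n^{2}+\cdots}$, and extract the class $3\pmod 5$ by the change of variables.
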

\begin{proof}[\it{Proof}] We start with\\
	$\displaystyle \sum_{n=0}^{\infty}a(n)q^n=\left(q,q^4;q^5\right)_\infty \left(q^6,q^9;q^{15}\right)_\infty^2$\\
	$=\dfrac{f\left(-q,-q^4\right)f\left(-q^6,-q^9\right)^2}{\left(q^5;q^5\right)_\infty\left(q^{15};q^{15}\right)_\infty^2}$\\
	$=\dfrac{\left(f\left(q^7,q^{13}\right)-qf\left(q^3,q^{17}\right)\right)\left(f\left(q^{12},q^{18}\right)\phi(q^{15})-2q^6f\left(q^3,q^{27}\right)\psi(q^{30})\right)}{\left(q^5;q^5\right)_\infty\left(q^{15};q^{15}\right)_\infty^2}$\\
	$=\dfrac{\phi(q^{15})\left(f\left(q^7,q^{13}\right)f\left(q^{12},q^{18}\right)-qf\left(q^3,q^{17}\right)f\left(q^{12},q^{18}\right)\right)}{\left(q^5;q^5\right)_\infty\left(q^{15};q^{15}\right)_\infty^2}\;\\
	+\dfrac{2\psi(q^{30})\left(q^7f\left(q^3,q^{17}\right)f\left(q^{3},q^{27}\right)-q^6f\left(q^7,q^{13}\right)f\left(q^{3},q^{27}\right)\right)}{\left(q^5;q^5\right)_\infty\left(q^{15};q^{15}\right)_\infty^2}$\\
	$=\dfrac{\phi(q^{15})(S_1-S_2)}{\left(q^5;q^5\right)_\infty\left(q^{15};q^{15}\right)_\infty^2}\;+\;\dfrac{2\psi(q^{15})(S_3-S_4)}{\left(q^5;q^5\right)_\infty\left(q^{15};q^{15}\right)_\infty^2}$,	\\
	\noindent
	where \begin{align*}
		S_1 = f(q^7,q^{13})f(q^{12},q^{18})=	\sum_{m,n=-\infty}^{\infty}q^{10m^2+3m+15n^2+3n},
	\end{align*}
	\begin{align*}
		S_2=qf(q^3,q^{17})f(q^{12},q^{18})=\sum_{m,n=-\infty}^{\infty}q^{10m^2+7m+15n^2+3n+1},
	\end{align*}
	\begin{align*}
		S_3=q^7f(q^3,q^{17})f(q^3,q^{27})=\sum_{m,n=-\infty}^{\infty}q^{10m^2+7m+15n^2+12n+7},
	\end{align*}
	\begin{align*}
		S_4=q^6f(q^7,q^{13})f(q^3,q^{27})=\sum_{m,n=-\infty}^{\infty}q^{10m^2+3m+15n^2+12n+6}.
	\end{align*}
	\noindent	In $S_1$, $3m+3n\equiv3\pmod5$ implies $m+n\equiv1\pmod5$ and $-2m+3n\equiv3\pmod5$. By solving these two congruence we get, $m=3s-r$ and $n=2s+r+1$. Therefore, 3-component of $S_1$ is \begin{align*}
		q^{18}\sum_{r,s=-\infty}^{\infty}q^{150s^2+25r^2+75s+30r}.
	\end{align*}
	In $S_2$, $7m+3n+1\equiv3\pmod5$ implies $2m+3n\equiv2\pmod5$ and $-m+n\equiv-1\pmod5$. By solving these two congruence we get, $m=s-3r+1$ and $n=2r+s$. Therefore, 3-component of 	$S_2$ is \begin{align*}
		q^{18}\sum_{r,s=-\infty}^{\infty}q^{150s^2+25r^2+75s+30r}.
	\end{align*}
	In $S_3$, $7m+12n\equiv-4\pmod5$ implies $m+n\equiv-2\pmod5$ and $2m-3n\equiv1\pmod5$. By solving these two congruence we get, $m=3s+r-1$ and $n=2s-r-1$. Therefore, 3-component of $S_3$ is \begin{align*}
		q^{13}\sum_{r,s=-\infty}^{\infty}q^{150s^2+25r^2-75s+5r}.
	\end{align*}	 	
	In $S_4$, $3m+12n\equiv-3\pmod5$ implies $m-n\equiv-1\pmod5$ and $-2m-3n\equiv2\pmod5$. By solving these two congruence we get, $m=3s-r-1$ and $n=-2s-r$. Therefore, 3-component of $S_4$ is \begin{align*}
		q^{13}\sum_{r,s=-\infty}^{\infty}q^{150s^2+25r^2-75s+5r}.
	\end{align*}
	Hence 3-components cancel in pairs and we get $a(5n+3)=0$. We exclude the proof of remaining identities as the proof is similar.
	This completes the proof of Theorem  \ref{t:v.1.1}.
\end{proof}
\begin{remark}
	We note, the identities	$b_2(5n+2)=b_7(5n+3)=c_4(5n+4)=c_6(5n+1)=0$, are also special cases of \eqref{3} with $\ell=1$ and $t=1,4,2,3$, respectively.
\end{remark}
	Let $i>0,j\geq0$ be integers and let $H(q)=\sum_{n=0}^{\infty}h(n)q^n$ be a formal power series. Define an operator $T_{i,j}$ by
\begin{align}
	T_{i,j}(H(q)):=\sum_{n=0}^{\infty}h(in+j)q^{in+j}.
\end{align}
\begin{theorem}\label{t:v.1.2}
	For non-negative integers $\alpha,\beta$ and $\gamma\leq 15$, define
	\begin{align*}
		\sum_{n=0}^{\infty}e_\alpha(n)q^n=\left(\pm q,\pm q^6;q^7\right)_\infty \left(q^\alpha,q^{21-\alpha};q^{21}\right)_\infty,
	\end{align*} 
	\begin{align*}
		\sum_{n=0}^{\infty}f_\beta(n)q^n=\left(\pm q^2,\pm q^5;q^7\right)_\infty \left(q^\beta,q^{21-\beta};q^{21}\right)_\infty,
	\end{align*} 
	\begin{align*}
		\sum_{n=0}^{\infty}g_\gamma(n)q^n=\left(\pm q^3,\pm q^4;q^7\right)_\infty \left(q^\gamma,q^{21-\gamma};q^{21}\right)_\infty.
	\end{align*} 
	\rm{Then} \\  $e_2(7n+5)=e_5(7n+6)=e_9(7n+2)=0,$\\
	$f_3(7n+5)=f_4(7n+3)=f_{10}(7n+4)=0,$\\
	$g_1(7n+4)=g_6(7n+1)=g_8(7n+6)=0.$
\end{theorem}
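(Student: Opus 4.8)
The plan is to run, modulo $7$, the same programme used for Theorem~\ref{t:v.1.1}. Fix one of the three families — say $\sum_{n\ge 0}e_\alpha(n)q^n=\left(\pm q,\pm q^6;q^7\right)_\infty\left(q^\alpha,q^{21-\alpha};q^{21}\right)_\infty$ — together with one of the two sign choices. By the Jacobi triple product identity,
\[
\sum_{n=0}^{\infty}e_\alpha(n)q^n=\frac{f(\mp q,\mp q^6)\,f(-q^\alpha,-q^{21-\alpha})}{(q^7;q^7)_\infty\,(q^{21};q^{21})_\infty},
\]
and likewise the numerators of $\sum f_\beta(n)q^n$ and $\sum g_\gamma(n)q^n$ are $f(\mp q^2,\mp q^5)f(-q^\beta,-q^{21-\beta})$ and $f(\mp q^3,\mp q^4)f(-q^\gamma,-q^{21-\gamma})$. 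Since the denominator is a power series in $q^7$, it is invisible to the extraction of residues modulo $7$; thus it suffices to show that the numerator carries no term $q^{7n+j}$ for the targeted $j$ — namely $j=5,6,2$ for $e_2,e_5,e_9$; $j=5,3,4$ for $f_3,f_4,f_{10}$; $j=4,1,6$ for $g_1,g_6,g_8$ — equivalently that $T_{7,j}$ annihilates the numerator.

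Next I would apply the two-term dissection $f(a,b)=f(a^3b,ab^3)+af(b/a,a^5b^3)$ of Lemma~\ref{l.3} to \emph{both} theta factors; for the first this gives, for instance, $f(-q,-q^6)=f(q^9,q^{19})-q\,f(q^5,q^{23})$ and $f(q,q^6)=f(q^9,q^{19})+q\,f(q^5,q^{23})$, and for the second $f(-q^\alpha,-q^{21-\alpha})=f(q^{2\alpha+21},q^{63-2\alpha})-q^\alpha f(q^{21-2\alpha},q^{63+2\alpha})$, with analogous splittings for the $f_\beta$ and $g_\gamma$ families. This serves two purposes: it replaces the half-integer exponents of $f(-q^c,-q^d)$ by theta functions with genuine integer exponents, and it expands the numerator into a signed sum of four products $S_1,S_2,S_3,S_4$, each of the shape $\sum_{m,n=-\infty}^{\infty}q^{Q(m,n)}$ with $Q(m,n)=Am^2+Bm+Cn^2+Dn+E$ explicit and with $A$ and $C$ divisible by $7$. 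The operator $T_{7,j}$ then retains from each $S_i$ exactly the lattice points $(m,n)$ with $Q(m,n)\equiv j\pmod 7$.

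The heart of the argument is then the elementary bookkeeping already on display in Theorem~\ref{t:v.1.1}. Because $A$ and $C$ vanish modulo $7$, the condition $Q(m,n)\equiv j$ reduces to a single linear congruence $Bm+Dn\equiv j-E\pmod 7$; following the proof of Theorem~\ref{t:v.1.1} I would solve it by introducing two free integer parameters $r,s$, lifting a carefully chosen pair of linear forms in $m,n$ to the integers, so that the induced substitution $m=a_1s+a_2r+a_3$, $n=b_1s+b_2r+b_3$ \emph{diagonalizes} the quadratic part — the precise choice of forms and of the constants $a_3,b_3$ being made so that the $j$-component of each $S_i$ turns into a diagonal double theta series $q^{E'}\sum_{r,s=-\infty}^{\infty}q^{A''s^2+B''s+C''r^2+D''r}$. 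Comparing these across $i=1,2,3,4$, I expect the $j$-components either to vanish outright or to coincide in pairs (exactly as the $3$-components did in the mod-$5$ computation), so that, carrying their signs from the dissected numerator, they cancel; hence $T_{7,j}$ of the numerator is $0$ and the stated coefficient vanishes.

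The main obstacle is volume, not ideas. There are three families, three residues apiece, and a $\pm$ option, so eighteen separate dissections must be carried through, and in each the correct pair of lifting forms — equivalently the substitution $(m,n)\mapsto(r,s)$ with its exact constant shifts — must be found by solving a $2\times 2$ linear system modulo $7$ and lifting to the integers, with the shifts pinned down so that the paired $q$-series match term by term. Once those substitutions are in hand the cancellations are automatic, so I would present one case in full detail (say $e_2(7n+5)=0$) and remark, as the authors do for the $b_\alpha$ and $c_\beta$ families in Theorem~\ref{t:v.1.1}, that the remaining identities and their $\pm$ variants follow by an identical computation.
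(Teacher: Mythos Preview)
Your proposal is correct and follows essentially the same approach as the paper: rewrite the product as a quotient of theta functions over a power series in $q^7$, apply the two-term dissection of Lemma~\ref{l.3} to both numerator factors to obtain four double sums $S_1,\dots,S_4$, extract the relevant residue class via a linear change of variables $(m,n)\mapsto(r,s)$ that diagonalizes the quadratic form, and observe pairwise cancellation. The paper likewise works out only $e_2(7n+5)=0$ (with the minus sign) in detail and declares the remaining cases similar.
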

\begin{proof}[\it{Proof}] We start with\\
	$\displaystyle \sum_{n=0}^{\infty}e_2(n)q^n=\left(q,q^6;q^7\right)_\infty \left(q^2,q^{19};q^{21}\right)_\infty$\\
	$=\dfrac{f\left(-q,-q^6\right)f\left(-q^2,-q^{19}\right)}{\left(q^7;q^7\right)_\infty\left(q^{21};q^{21}\right)_\infty}$\\
	$=\dfrac{\left(f\left(q^9,q^{19}\right)-qf\left(q^5,q^{23}\right)\right)\left(f\left(q^{25},q^{59}\right)-q^2f\left(q^{17},q^{67}\right)\right)}{\left(q^7;q^7\right)_\infty\left(q^{21};q^{21}\right)_\infty}$\\
	$=\dfrac{\left(f\left(q^9,q^{19}\right)f\left(q^{25},q^{59}\right)-q^2f\left(q^9,q^{19}\right)f\left(q^{17},q^{67}\right)\right)}{\left(q^7;q^7\right)_\infty\left(q^{21};q^{21}\right)_\infty}\\
	-\dfrac{\left(qf\left(q^5,q^{23}\right)f\left(q^{25},q^{59}\right)-q^3f\left(q^5,q^{23}\right)f\left(q^{17},q^{67}\right)\right)}{\left(q^7;q^7\right)_\infty\left(q^{21};q^{21}\right)_\infty}$\\
	$=\dfrac{\left(S_1-S_2\right)-\left(S_3-S_4\right)}{\left(q^7;q^7\right)_\infty\left(q^{21};q^{21}\right)_\infty},$\\
	\noindent	where \begin{align*}
		S_1 = f(q^9,q^{19})f(q^{25},q^{59})=	\sum_{m,n=-\infty}^{\infty}q^{14m^2+5m+42n^2+17n},
	\end{align*}
	\begin{align*}
		S_2=q^2f(q^9,q^{19})f(q^{17},q^{67})=\sum_{m,n=-\infty}^{\infty}q^{14m^2+5m+42n^2+25n+2},
	\end{align*}
	\begin{align*}
		S_3=qf(q^5,q^{23})f(q^{25},q^{59})=\sum_{m,n=-\infty}^{\infty}q^{14m^2+9m+42n^2+17n+1},
	\end{align*}
	\begin{align*}
		S_4=q^3f(q^7,q^{13})f(q^3,q^{27})=\sum_{m,n=-\infty}^{\infty}q^{14m^2+9m+42n^2+25n+3}.
	\end{align*}
	In $S_1$, $5m+17n\equiv5\pmod7$ implies $-m-2n\equiv-1\pmod7$ and $-2m+3n\equiv-2\pmod7$. By solving these two congruence we get, $m=-3r-2s+1$ and $n=-2r+s$. Therefore, 5-component of $S_1$ is \begin{align*}
		T_{7,5}(S_1)= q^{19}\sum_{r,s=-\infty}^{\infty}q^{294r^2+98s^2-133r-49s}.
	\end{align*} 
	Similarly, we obtain
	\begin{align*}
		T_{7,5}(S_2)= q^{19}\sum_{r,s=-\infty}^{\infty}q^{294r^2+98s^2-133r+49s},
	\end{align*}
	\begin{align*}
		T_{7,5}(S_3)=	q^{26}\sum_{r,s=-\infty}^{\infty}q^{98r^2+294s^2+49r-161s},
	\end{align*}
	\begin{align*}
		T_{7,5}(S_4)=q^{26}\sum_{r,s=-\infty}^{\infty}q^{98r^2+294s^2+49r-161s}.
	\end{align*}
	Hence 5-components cancel in pairs and we get $e_2(7n+5)=0$. Since the proof is similar, we skip the proof of remaining identities. This completes the proof of Theorem  \ref{t:v.1.2}.\\
\end{proof} 
\begin{theorem}\label{t:v.1.3}
	For non-negative integers $u,v$ and $w\leq 15$, define 
	\begin{align*}
		\sum_{n=0}^{\infty}h_u(n)q^n=\left(q^1,q^6;q^7\right)_\infty \left(\pm q^u,\pm q^{21-u};q^{21}\right)_\infty^2,
	\end{align*} 
	\begin{align*}
		\sum_{n=0}^{\infty}i_v(n)q^n=\left(q^2,q^5;q^7\right)_\infty \left(\pm q^v,\pm q^{21-v};q^{21}\right)_\infty^2,
	\end{align*} 
	\begin{align*}
		\sum_{n=0}^{\infty}j_w(n)q^n=\left(q^1,q^6;q^7\right)_\infty \left(\pm q^w,\pm q^{21-w};q^{21}\right)_\infty^2.
	\end{align*}
	\rm{Then}\\  $h_3(7n+6)=h_4(7n+2)=h_{10}(7n+4)=0,$\\
	$i_1(7n+2)=i_6(7n+3)=i_8(7n+6)=0$,\\
	$j_2(7n+4)=j_5(7n+6)=j_9(7n+5)=0.$
\end{theorem}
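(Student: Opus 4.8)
The plan is to imitate, with $5$ and $15$ replaced by $7$ and $21$, the proof of the identity $a(5n+3)=0$ given for Theorem~\ref{t:v.1.1}, since each of $h_u,i_v,j_w$ is a single theta factor at modulus $7$ times the square of a theta factor at modulus $21$, exactly as $a(n)$ was. I will describe the argument for $h_3$ (so $u=3$ and the target residue is $6$ modulo $7$); the other eight assertions follow by the same computation, and choosing the lower sign in $\left(\pm q^u,\pm q^{21-u};q^{21}\right)_\infty^2$ merely changes the sign of the term $2\psi(q^{42})(S_3-S_4)$ below, which does not affect the conclusion.

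First I would apply the Jacobi triple product identity to write
\begin{align*}
\sum_{n=0}^{\infty}h_3(n)q^n=\left(q,q^6;q^7\right)_\infty\left(q^3,q^{18};q^{21}\right)_\infty^2=\frac{f\left(-q,-q^6\right)f\left(-q^3,-q^{18}\right)^2}{\left(q^7;q^7\right)_\infty\left(q^{21};q^{21}\right)_\infty^2}.
\end{align*}
Next I would dissect the numerator with Lemma~\ref{l.3}: its first identity gives $f\left(-q,-q^6\right)=f\left(q^9,q^{19}\right)-qf\left(q^5,q^{23}\right)$, and its second identity, together with $f\left(q^{21},q^{21}\right)=\phi(q^{21})$ and $f\left(1,q^{42}\right)=2\psi(q^{42})$ from Lemma~\ref{l.1} and Lemma~\ref{l.2}, gives
\begin{align*}
f\left(-q^3,-q^{18}\right)^2=f\left(q^6,q^{36}\right)\phi(q^{21})-2q^3f\left(q^{15},q^{27}\right)\psi(q^{42}).
\end{align*}
Multiplying these out, the numerator becomes $\phi(q^{21})(S_1-S_2)-2\psi(q^{42})(S_3-S_4)$, where $S_1=f\left(q^9,q^{19}\right)f\left(q^6,q^{36}\right)$, $S_2=qf\left(q^5,q^{23}\right)f\left(q^6,q^{36}\right)$, $S_3=q^3f\left(q^9,q^{19}\right)f\left(q^{15},q^{27}\right)$ and $S_4=q^4f\left(q^5,q^{23}\right)f\left(q^{15},q^{27}\right)$; expanding each product of theta functions, $S_i=\sum_{m,n=-\infty}^{\infty}q^{14m^2+\varepsilon_im+21n^2+\delta_in+\kappa_i}$ for suitable integer constants.

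Since every exponent occurring in $\phi(q^{21})$, in $\psi(q^{42})$ and in the denominator is divisible by $7$, the coefficient of $q^{7n+6}$ in $\sum h_3(n)q^n$ vanishes as soon as $T_{7,6}(S_1)=T_{7,6}(S_2)$ and $T_{7,6}(S_3)=T_{7,6}(S_4)$. For each $i$, reducing the exponent of $S_i$ modulo $7$ turns the condition that the exponent be $\equiv 6\pmod 7$ into a single linear congruence in $m$ and $n$; I would then choose a linear substitution $m=a_ir+b_is+c_i$, $n=d_ir+e_is+f_i$ whose two direction vectors span the solution lattice of that congruence and are orthogonal with respect to the form $14X^2+21Y^2$ (so that the $rs$-term disappears), just as the vectors giving $m=3s-r$, $n=2s+r+1$ were used against $10X^2+15Y^2$ in Theorem~\ref{t:v.1.1}. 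This rewrites $T_{7,6}(S_i)$ as $q^{k_i}\sum_{r,s=-\infty}^{\infty}q^{7\left(\lambda_ir^2+\mu_is^2+\nu_ir+\rho_is\right)}$, and I expect the members of each pair to coincide after at most a reflection $r\mapsto-r$ or $s\mapsto-s$, so that $S_1-S_2$ and $S_3-S_4$ contribute nothing and $h_3(7n+6)=0$.

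The main obstacle is precisely this final bookkeeping: one must choose the shifts $c_i,f_i$ so that the residue $6$ is realized, choose the two direction vectors so that they span the full (index $7$) solution lattice rather than a proper sublattice of it, and then check that the resulting prefactors $q^{k_i}$ and diagonal forms match up inside the two cancelling pairs; this computation then has to be repeated for all nine sequences and both choices of sign. No ingredient beyond Theorem~\ref{t:v.1.1} seems to be required, so the difficulty is purely computational.
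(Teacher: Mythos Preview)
Your proposal is correct and follows essentially the same approach as the paper: the paper, too, writes $\sum h_3(n)q^n=\dfrac{f(-q,-q^6)f(-q^3,-q^{18})^2}{(q^7;q^7)_\infty(q^{21};q^{21})_\infty^2}$, dissects via Lemma~\ref{l.3} into $\phi(q^{21})(S_1-S_2)+2\psi(q^{42})(-S_3+S_4)$ with the same four $S_i$ you name, and then extracts the $6$-component of each $S_i$ by an integral orthogonal change of variables, obtaining pairs that coincide up to a reflection $r\mapsto-r$ or $s\mapsto-s$ exactly as you anticipate. The only difference is that the paper carries out the explicit substitutions and records the resulting sums, whereas you stop at the description of the bookkeeping.
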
 
\begin{proof}[\it{Proof}] We start with\\
	$\displaystyle \sum_{n=0}^{\infty}h_3(n)q^n=\left(q,q^6;q^7\right)_\infty \left(q^3,q^{18};q^{21}\right)_\infty^2$\\
	$=\dfrac{f\left(-q,-q^6\right)f\left(-q^3,-q^{18}\right)^2}{\left(q^7;q^7\right)_\infty\left(q^{21};q^{21}\right)_\infty^2}$\\	$=\dfrac{\left(f\left(q^9,q^{19}\right)-qf\left(q^5,q^{23}\right)\right)\left(f\left(q^{6},q^{36}\right)\phi(q^{21})-2q^3f\left(q^{15},q^{27}\right)\psi(q^{42})\right)}{\left(q^7;q^7\right)_\infty\left(q^{21};q^{21}\right)_\infty^2}$\\
	$=\dfrac{\phi(q^{21})\left(f\left(q^9,q^{19}\right)f\left(q^6,q^{36}\right)-qf\left(q^5,q^{23}\right)f\left(q^{6},q^{36}\right)\right)}{\left(q^7;q^7\right)_\infty\left(q^{21};q^{21}\right)_\infty^2}\\
	+\dfrac{2\psi(q^{42})\left(-q^3f\left(q^9,q^{19}\right)f\left(q^{15},q^{27}\right)+q^4f\left(q^5,q^{23}\right)f\left(q^{15},q^{27}\right)\right)}{\left(q^7;q^7\right)_\infty\left(q^{21};q^{21}\right)_\infty^2}$\\
	$=\dfrac{\phi(q^{21})(S_1-S_2)}{\left(q^7;q^7\right)_\infty\left(q^{21};q^{21}\right)_\infty^2}\;+\;\dfrac{2\psi(q^{42})(-S_3+S_4)}{\left(q^7;q^7\right)_\infty\left(q^{21};q^{21}\right)_\infty^2}$,\\
	where \begin{align*}
		S_1 = f(q^9,q^{19})f(q^6,q^{36})=	\sum_{m,n=-\infty}^{\infty}q^{14m^2+5m+21n^2+15n},
	\end{align*}
	\begin{align*}
		S_2= qf(q^5,q^{23})f(q^6,q^{36})=\sum_{m,n=-\infty}^{\infty}q^{14m^2+9m+21n^2+15n+1},
	\end{align*}
	\begin{align*}
		S_3= q^3f(q^9,q^{19})f(q^{15},q^{27})=\sum_{m,n=-\infty}^{\infty}q^{14m^2+5m+21n^2+6n+3},
	\end{align*}	
	\begin{align*}
		S_4= q^4f(q^5,q^{23})f(q^{15},q^{27})=\sum_{m,n=-\infty}^{\infty}q^{14m^2+9m+21n^2+6n+4}	.
	\end{align*}

	\noindent	In $S_1$, $5m+15n\equiv6\pmod7$ implies $-m-3n\equiv3\pmod7$ and $-2m+n\equiv-1\pmod7$.By solving these two congruence we get, $m=-r-3s$ and $n=-2r+s-1$. Therefore, 6-component of $S_1$ is \begin{align*}
		T_{7,6}(S_1)=q^6\sum_{r,s=-\infty}^{\infty}q^{98r^2+147s^2+49r-42s}.
	\end{align*}
	Similarly, we obtain
	\begin{align*}
		T_{7,6}(S_2)=q^6\sum_{r,s=-\infty}^{\infty}q^{98r^2+147s^2-49r-42s},
	\end{align*} 
	\begin{align*}
		T_{7,6}(S_3)=	q^{27}\sum_{r,s=-\infty}^{\infty}q^{147r^2+98s^2+105r+49s},
	\end{align*}
	\begin{align*}
		T_{7,6}(S_4)=q^{27}\sum_{r,s=-\infty}^{\infty}q^{147r^2+98s^2+105r-49s}.
	\end{align*}
	Hence 6-components cancel in pairs and we get $h_3(7n+6)=0$. With similar approach one can prove remaining results of Theorem  \ref{t:v.1.3}.
\end{proof}
\begin{remark}
		We note the identities, 	$h_3(7n+6)=i_1(7n+2)=0$, are also special cases of \eqref{4} with $\ell=1$ and $t=1,3$ respectively.
\end{remark}
\begin{theorem}\label{t:v.1.4}
	Define
	\begin{align*}
		\sum_{n=0}^{\infty}k(n)q^n=\left(q^1,q^6;q^7\right)_\infty \left(-q^9,-q^{12};q^{21}\right)_\infty,
	\end{align*} 
	\begin{align*}
		\sum_{n=0}^{\infty}l(n)q^n=\left(q^2,q^5;q^7\right)_\infty \left(-q^3,-q^{18};q^{21}\right)_\infty,
	\end{align*} 
	\begin{align*}
		\sum_{n=0}^{\infty}t(n)q^n=\left(q^3,q^4;q^7\right)_\infty \left(-q^6,-q^{15};q^{21}\right)_\infty,
	\end{align*}
	\begin{align*}
		\sum_{n=0}^{\infty}o(n)q^n=\left(\pm q,\pm q^6;q^7\right)_\infty^2 \left(q^6,q^{15};q^{21}\right)_\infty,
	\end{align*}  
	\begin{align*}
		\sum_{n=0}^{\infty}p(n)q^n=\left(\pm q^2,\pm q^5;q^7\right)_\infty^2 \left(q^9,q^{12};q^{21}\right)_\infty,
	\end{align*}  
	\begin{align*}
		\sum_{n=0}^{\infty}z(n)q^n=\left(\pm q^3,\pm q^4;q^7\right)_\infty^2 \left(q^3,q^{18};q^{21}\right)_\infty.
	\end{align*}
	\rm{Then}   $$k(7n+4)=l(7n+6)=t(7n+5)=o(7n+4)=p(7n+1)=z(7n+5)=0.$$ \end{theorem}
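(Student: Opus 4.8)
The plan is to handle Theorem~\ref{t:v.1.4} by the method already used in Theorems~\ref{t:v.1.2} and~\ref{t:v.1.3}, splitting the six series into two families according to whether the heptagonal factor is squared. First I would use the Jacobi triple product identity to write each product as a quotient whose denominator is a product of powers of $(q^7;q^7)_\infty$ and $(q^{21};q^{21})_\infty$ and whose numerator is a product of one or two of Ramanujan's theta functions; for instance $\sum_{n=0}^{\infty}k(n)q^n=f(-q,-q^6)f(q^9,q^{12})\big/\big((q^7;q^7)_\infty(q^{21};q^{21})_\infty\big)$ and $\sum_{n=0}^{\infty}o(n)q^n=f^2(\mp q,\mp q^6)f(-q^6,-q^{15})\big/\big((q^7;q^7)_\infty^2(q^{21};q^{21})_\infty\big)$, the sign in the latter recording the choice $\pm$. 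Since $(q^7;q^7)_\infty$, $(q^{21};q^{21})_\infty$, $\phi(q^7)$ and $\psi(q^{14})$ are all power series in $q^7$, each stated congruence is equivalent to the vanishing of the operator $T_{7,j}$ (with $j$ the relevant residue) applied to the corresponding numerator, so the problem reduces to analysing these numerators.

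For $k,l,t$ I would apply the addition formula $f(a,b)=f(a^3b,ab^3)+af(b/a,a^5b^3)$ of Lemma~\ref{l.3} to \emph{both} theta factors. Taking $k$ as the model, $f(-q,-q^6)=f(q^9,q^{19})-qf(q^5,q^{23})$ and $f(q^9,q^{12})=f(q^{39},q^{45})+q^9f(q^3,q^{81})$; multiplying these out yields four double theta series $S_1,\dots,S_4$ (entering with signs $+,+,-,-$), and the numerator of $\sum_{n=0}^{\infty}k(n)q^n$ is their signed sum. Each $S_i$ has the shape $\sum_{m,n}q^{Q_i(m,n)}$ with $Q_i$ a quadratic polynomial whose quadratic part has every coefficient divisible by $7$; hence $Q_i(m,n)\equiv 4\pmod 7$ is a single linear congruence in $m,n$, and solving it through a substitution $(m,n)=(\alpha r+\beta s+c_1,\gamma r+\delta s+c_2)$ with $\alpha\delta-\beta\gamma=\pm 7$ rewrites $T_{7,4}(S_i)$ as $q^{c_i}\sum_{r,s}q^{7Q_i'(r,s)}$ with $c_i\equiv 4\pmod 7$ and $Q_i'$ integral. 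The aim is then to check that these four contributions cancel in pairs, so that $T_{7,4}$ kills the numerator and $k(7n+4)=0$; the same recipe applied to the numerators $f(-q^2,-q^5)f(q^3,q^{18})$ and $f(-q^3,-q^4)f(q^6,q^{15})$ should yield $l(7n+6)=0$ and $t(7n+5)=0$.

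For $o,p,z$ the heptagonal factor is squared, so in place of the addition formula I would use $f^2(a,b)=f(a^2,b^2)f(ab,ab)+af(b/a,a^3b)f(1,a^2b^2)$ from Lemma~\ref{l.3}, together with $f(q^7,q^7)=\phi(q^7)$ and $f(1,q^{14})=2\psi(q^{14})$ (Lemmas~\ref{l.1} and~\ref{l.2}). For $o$ this gives $f^2(-q,-q^6)=\phi(q^7)f(q^2,q^{12})-2q\,\psi(q^{14})f(q^5,q^9)$ (the opposite sign choice in $(\pm q,\pm q^6;q^7)_\infty^2$ merely flips the sign of the $\psi$-term, which does not affect what follows); after dissecting also $f(-q^6,-q^{15})=f(q^{33},q^{51})-q^6f(q^9,q^{75})$ and multiplying out, the numerator of $\sum_{n=0}^{\infty}o(n)q^n$ takes the form $\phi(q^7)(S_1-S_2)-2\psi(q^{14})(S_3-S_4)$ with $S_1,\dots,S_4$ products of two theta series. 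Since $\phi(q^7)$ and $\psi(q^{14})$ are series in $q^7$, $T_{7,4}$ of the numerator equals $\phi(q^7)T_{7,4}(S_1-S_2)-2\psi(q^{14})T_{7,4}(S_3-S_4)$, and I would finish as in the previous paragraph, solving one linear congruence per $S_i$, reparametrising, and matching the forms to conclude $o(7n+4)=0$. The analogous computations use $f^2(-q^2,-q^5)=\phi(q^7)f(q^4,q^{10})-2q^2\psi(q^{14})f(q^3,q^{11})$ with $f(-q^9,-q^{12})$ for $p$, and $f^2(-q^3,-q^4)=\phi(q^7)f(q^6,q^8)-2q^3\psi(q^{14})f(q,q^{13})$ with $f(-q^3,-q^{18})$ for $z$; I would also record, parallel to the remarks after Theorems~\ref{t:v.1.1} and~\ref{t:v.1.3}, that some of these should be special cases of~\eqref{5} with $\ell=1$.

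The dissections and the solving of the linear congruences are routine; the genuine obstacle is the last step — confirming, in each of the six cases, that the four expressions $q^{c_i}\sum_{r,s}q^{7Q_i'(r,s)}$ really do cancel in pairs. This is exactly where the particular exponents chosen in the definitions matter ($9,12$ for $k$; $3,18$ for $l$; $6,15$ for $t$; $6,15$ for $o$; $9,12$ for $p$; $3,18$ for $z$): with a different exponent the forms $Q_i'$ would in general fail to agree and no vanishing would occur. To make the pairwise identification transparent I would, in each case, choose the parametrisation, of determinant $\pm 7$, so that the two surviving forms differ only by the substitutions $r\mapsto -r$, $s\mapsto -s$, or an interchange of $r$ and $s$ — the same mechanism visible in the proofs of Theorems~\ref{t:v.1.2} and~\ref{t:v.1.3}.
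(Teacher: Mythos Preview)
Your proposal is correct and follows precisely the approach the paper intends: the paper does not give a detailed argument for Theorem~\ref{t:v.1.4} but simply states (in the Remark following it) that the proof is similar to those of Theorems~\ref{t:v.1.2} and~\ref{t:v.1.3}, and your outline spells out exactly how that similarity plays out --- dissecting both theta factors via the addition formula for $k,l,t$ (as in Theorem~\ref{t:v.1.2}) and using the square identity $f^2(a,b)=f(a^2,b^2)\phi(ab)+2af(b/a,a^3b)\psi(a^2b^2)$ on the mod-$7$ factor for $o,p,z$ (as in Theorem~\ref{t:v.1.3}), followed by the same linear-congruence parametrisation and pairwise cancellation. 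Your closing observation that $o,p,z$ should fall under~\eqref{5} with $\ell=1$ is also exactly what the paper records.
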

\begin{remark}
	We exclude the proof of Theorem \ref{t:v.1.4} as it is similar to the proof of Theorem \ref{t:v.1.2} and \ref{t:v.1.3}. We also note the identities, $o(7n+4)=p(7n+1)=z(7n+5)=0$ are special cases of \eqref{5} with $\ell=1$ and $t=1,2,3$ respectively.
\end{remark}
\section{Conclusion}
In this paper, we have studied several vanishing coefficients in arithmetic progressions of infinite products defined as in Theorem \ref{t:v.1.1} - \ref{t:v.1.4} with modulo 5 and 7 by employing properties of Ramanujan's theta functions and elementary q series manipulations. It will be also interesting to study the periodicity of coefficients in the series expansions of infinite products defined in \ref{t:v.1.1} - \ref{t:v.1.4}.

\end{document}